\newtheorem{defi}{Definition}[section]
\newtheorem{theorem}[defi]{Theorem}
\newtheorem{lemma}[defi]{Lemma}
\newtheorem{corollary}[defi]{Corollary}
\newtheorem{question}{Question}
\theoremstyle{definition}
\newtheorem{problem}{Problem}[section]
\newtheorem{example}[defi]{Example}
\newtheorem{remark}[defi]{Remark}
\newcommand{\notimplies}{%
\mathrel{{\ooalign{\hidewidth$\not\phantom{=}$\hidewidth\cr$\implies$}}}}
\begin{document}

\keywords{Distance-balanced graph, $p$TS-distance-balanced graph, Total distance, Wreath product of graphs.}

\title{Distance-balanced graphs and Travelling Salesman Problems}

\author{Matteo Cavaleri}
\address{Matteo Cavaleri, Universit\`{a} degli Studi Niccol\`{o} Cusano - Via Don Carlo Gnocchi, 3 00166 Roma, Italia}
\email{matteo.cavaleri@unicusano.it}

\author{Alfredo Donno}
\address{Alfredo Donno, Universit\`{a} degli Studi Niccol\`{o} Cusano - Via Don Carlo Gnocchi, 3 00166 Roma, Italia}
\email{alfredo.donno@unicusano.it \textrm{(Corresponding author)}}

\begin{abstract}
For every probability $p\in[0,1]$ we define a distance-based graph property, the $p$TS-distance-balancedness, that in the case $p=0$ coincides with the standard distance-balancedness, and in the case $p=1$ is related to the Hamiltonian-connectedness. In analogy with the classical case, where the distance-balancedness of a graph is equivalent to the property of being self-median, we characterize the class of $p$TS-distance-balanced graphs in terms of their equity with respect to certain probabilistic centrality measures, inspired by the Travelling Salesman Problem. We prove that it is possible to detect this property looking at the classical distance-balancedness (and therefore looking at the classical centrality problems) of a suitable graph composition, namely the wreath product of graphs. More precisely, we characterize the distance-balancedness of a wreath product of two graphs in terms of the $p$TS-distance-balancedness of the factors.
\end{abstract}

\maketitle

\begin{center}
{\footnotesize{\bf Mathematics Subject Classification (2010)}: 05C12, 05C38, 05C76, 90C27.}
\end{center}

\section{Introduction}
The investigation of \emph{distance-balanced graphs} began in \cite{Handa}, though an explicit definition was provided later in \cite{distancebalancedintro,stronglyintro}. Such graphs generated a certain degree of interest also by virtue of their connection with centrality measures \cite{equalopportunity,scozzari} and with some well known and largely studied distance-based invariants, such as the Wiener and the Szeged index \cite{equalopportunity,dist,distancebalancedintro,khalifeh}. For instance, it was proven in \cite{dist} that, in the bipartite case, distance-balanced graphs maximize the Szeged index.\\
\indent Throughout the paper we will denote by $G=(V_G,E_G)$ a simple connected finite graph $G$ with vertex set $V_G$ and edge set $E_G$. We say that such a graph has order $n$ if $|V_G|=n$. For a pair of adjacent vertices $u,v\in V_G$ (we say $u\sim v$ in $G$) we define
\begin{eqnarray}\label{Wuv}
W_{uv}^G = \{z\in V_G\ | \ d_G(z,u) < d_G(z,v)\},
\end{eqnarray}
where $d_G(u,v)$ denotes the geodesic distance in $G$. In other words, $W_{uv}^G$ is the set of vertices of $G$ which are closer to $u$ than to $v$.

\begin{defi}\label{distdefi}
A graph $G=(V_G,E_G)$ is \emph{distance-balanced} if $|W_{uv}^G|=|W_{vu}^G|$, for every pair of adjacent vertices $u,v\in V_G$.
\end{defi}

Cyclic graphs and complete graphs are simple examples of distance-balanced graphs. More generally, it is known that the class of distance-balanced graphs contains vertex-transitive graphs \cite{stronglyintro}, which are graphs $G=(V_G,E_G)$ whose automorphism group $\operatorname{Aut}(G)$ acts transitively on the vertex set. On the other hand, the Handa graph $H_{24}$, introduced in \cite{Handa}, is an example of a non-vertex-transitive distance-balanced graph.\\
\indent Recall that semisymmetric graphs are regular graphs which are edge-transitive (the group $\operatorname{Aut}(G)$ acts transitively on the edge set) but not vertex-transitive graphs. In particular, a semisymmetric graph is bipartite, and the two sets of the bipartition coincide with the orbits of $\operatorname{Aut}(G)$. As for such graphs there exist no automorphism switching two adjacent vertices, they appear as good candidates to be not distance-balanced. However, in \cite{stronglyintro} it is explicitly proven that there exist infinitely many semisymmetric graphs which are not distance-balanced, as well as infinitely many semisymmetric graphs which are distance-balanced.\\
\indent In \cite{distancebalancedintro}, the behaviour of the four classical graph compositions with respect to the distance-balanced property is investigated. More precisely, it is shown that the Cartesian product $G\Box H$ of two connected graphs is distance-balanced if and only if both $G$ and $H$ are distance-balanced; the lexicographic product $G \circ H$ of two connected graphs is distance-balanced if and only if $G$ is distance-balanced and $H$ is regular; on the other hand, it is shown there, by explicit counterexamples, that the direct product $G\times H$ and the strong product $G \boxtimes H$ do not preserve the property of being distance-balanced.

In \cite{strongly}, in order to construct an algorithm that recognizes whether a given graph is distance-balanced or not, the authors establish a connection with some \emph{graph centrality measures}; more precisely, they characterize the distance-balancedness of a graph in terms of its median vertices, and therefore in terms of their total distance (also known as transmission in the literature).\\
\indent We denote the \emph{normalized total distance} of a vertex $u\in V_G$ as
\begin{equation*}
d_G(u):= \frac{1}{|V_G|} \sum_{v\in V_G}d_G(u,v),
\end{equation*}
which is the average of the distances of $u$ from each vertex of $G$.
A vertex $u\in V_G$ is said to be \emph{median} if $d_G(u)=\min_{v\in V_G} \{d_G(v)\}$. The graph $G$ is said to be \emph{self-median} if every vertex $u\in V_G$ is median.

\begin{theorem}\cite[Theorem 3.1]{strongly}\label{median}
A graph $G=(V_G,E_G)$ is distance-balanced if and only if it is self-median.
\end{theorem}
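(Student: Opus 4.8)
The plan is to reduce the statement to a single exact identity relating the total distances of two adjacent vertices to the sizes of the sets $W_{uv}^G$ and $W_{vu}^G$. Fix an edge $u\sim v$ in $G$ and partition the vertex set as $V_G = W_{uv}^G \sqcup W_{vu}^G \sqcup W^{=}_{uv}$, where $W^{=}_{uv}=\{z\in V_G\ |\ d_G(z,u)=d_G(z,v)\}$. The first key step is the observation that, since $u$ and $v$ are adjacent, the triangle inequality gives $|d_G(z,u)-d_G(z,v)|\le 1$ for every $z\in V_G$; as distances are nonnegative integers, this forces $d_G(z,v)=d_G(z,u)+1$ whenever $z\in W_{uv}^G$ and $d_G(z,u)=d_G(z,v)+1$ whenever $z\in W_{vu}^G$.

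Summing over $z\in V_G$ and cancelling the equal contributions of the vertices in $W^{=}_{uv}$ then yields
\[
\sum_{z\in V_G} d_G(z,v) - \sum_{z\in V_G} d_G(z,u) = |W_{uv}^G| - |W_{vu}^G|,
\]
that is, $|V_G|\bigl(d_G(v)-d_G(u)\bigr) = |W_{uv}^G| - |W_{vu}^G|$. From this identity the equivalence follows quickly. If $G$ is distance-balanced, the right-hand side vanishes on every edge, so $d_G$ is constant along edges; since $G$ is connected, $d_G$ is constant on $V_G$, and in particular every vertex attains the minimum value, i.e.\ $G$ is self-median. Conversely, if $G$ is self-median then $d_G\equiv \min_{v\in V_G}\{d_G(v)\}$ is constant, so the left-hand side vanishes on every edge, whence $|W_{uv}^G|=|W_{vu}^G|$ for all adjacent $u,v$ and $G$ is distance-balanced.

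The only genuinely delicate point is the first step: checking that an edge between $u$ and $v$ forces the distance from any third vertex to the two endpoints to differ by exactly $0$ or $1$, so that each vertex of $W_{uv}^G$ contributes $+1$ (and each vertex of $W_{vu}^G$ contributes $-1$) to the difference of the two distance-sums. Everything after that is bookkeeping: the disjoint partition of $V_G$, the cancellation over the equidistant set, and the use of connectedness to upgrade ``constant along edges'' to ``globally constant''. I expect no further obstacle, since the identity is an equality rather than an estimate and both implications read off from it directly.
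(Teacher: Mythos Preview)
Your proof is correct. The paper does not actually prove this theorem itself---it is quoted from \cite{strongly}---but the paper's proof of the generalization (Theorem~\ref{Tmedia}) follows exactly your approach in the special case $p=0$: it establishes the identity $d_G^p(u)-d_G^p(v)=\frac{1}{n}(w^p_{vu}-w^p_{uv})$ for adjacent $u,v$ by using that $|\rho_A(u,z)-\rho_A(v,z)|\le 1$ (your Equation~\eqref{rhopiud} with $A=\emptyset$), and then reads off both implications from connectedness, just as you do.
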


According to Theorem \ref{median}, distance-balanced graphs are graphs where all vertices have the same relevance in some sense, but they are not necessarily indistinguishable (notice that there are even examples of distance-balanced graphs with a trivial automorphism group \cite{knor}). Therefore, distance-balanced graphs are of special interest in the study of social networks, as all people in such graphs are \emph{equal} with respect to the total distance. A related measure of this equality is given by the \emph{opportunity index} \cite{equalopportunity}: distance-balanced graphs are characterized as those graphs whose opportunity index is zero.

In the present paper, aimed at generalizing distance-balancedness in a precise direction, we start exactly from this point of view, and we interpret the set of median vertices of a graph, and the whole class of distance-balanced graphs itself, as solutions of particular \emph{facility location} problems, very typical in graph centrality investigations.
In order to deeper understand this correspondence, let us suppose that $G=(V_G,E_G)$ represents a city; its vertex set is the set of the buildings/locations, the edges are connections between the buildings and then, for any $u,v\in V_G$, the geodesic distance $d_G(u,v)$ represents the distance between buildings $u$ and $v$, or the \emph{cost} of reaching the vertex $v$ from the vertex $u$. In these terms, the quantity $d_G(u)$ is the average distance of the location $u$ from all locations, and the median vertices are those vertices solving the following problem.
\begin{problem}\label{p1}
Find the location for a facility in order to minimize its average distance from all the buildings of the city.
\end{problem}

\noindent Consequently, distance-balanced graphs are those graphs whose vertices are all equal with respect to Problem \ref{p1}. That is, they solve this second problem.
\begin{problem}\label{p2}
Find a city where Problem \ref{p1} is solved by any location.
\end{problem}

From another point of view, that we will develop in the last part of the paper, our work can be interpreted as the investigation of the distance-balancedness in a \emph{wreath product of graphs}. The wreath product of graphs represents the graph analogue of the classical wreath product of groups, as it is true that the wreath product of the Cayley graphs of two finite groups is the Cayley graph of the wreath product of the groups. In \cite{gc}, this correspondence is proven in the more general context of generalized wreath products of graphs, inspired by the construction introduced in \cite{bayleygeneralized} for permutation groups. Also, observe that in \cite{coronanoi} the wreath product of matrices has been defined, in order to describe the adjacency matrix of the wreath product of two graphs: spectral computations using this matrix representation have been developed for some infinite families of wreath products in \cite{BCD,cocktail,compcomp}.

The paper is organized as follows. In Section \ref{secTS}, we consider two  optimization problems, namely Problem \ref{tp1} and Problem \ref{tp2}, that are the analogue, respectively, of Problem \ref{p1} and Problem \ref{p2}, where the centrality measure at the vertex $u$ is not yet the normalized total distance, but the quantity $d_G^p(u)$, that is, the expectation of the length of a shortest path starting from $u$ that satisfies some random requirements depending on the probability $p$. In particular, these new problems collapse to the classical ones in the case $p=0$.\\
\indent Problems \ref{tp1} and \ref{tp2} are of some interest on their own, given their connection with the Travelling Salesman Problem, which is among the most studied optimization problems, largely investigated in literature also in its several probabilistic versions (see, for instance, \cite{PTSP}).\\
\indent Then we extend the classical definition of distance-balanced graph by introducing the notion of \emph{$p$TS-distance-balanced} graph in Definition \ref{Tdb}, and we prove in Theorem \ref{Tmedia} a $p$TS analogue of Theorem \ref{median}: $p$TS-distance-balanced graphs are exactly the graphs that solve Problem \ref{tp2} (that is, the TS-version of Problem \ref{p2}). We present examples and non-examples of $p$TS-distance-balanced graphs.\\
\indent In Section \ref{wre}, we recall the definition of the wreath product $G\wr H$ of two graphs $G$ and $H$ (Definition \ref{defierschler}). It turns out that, when the order of $H$ is $m$, the uniform probability distribution on the vertices of $G\wr H$ is compatible, in a precise sense explained in Lemma \ref{ww}, with the model introduced in Section \ref{secTS} for $G$, when $p=\frac{m-1}{m}$.\\
\indent It follows that the TS-problems considered on the graph  $G$ are equivalent to the classical problems on the wreath product $G\wr H$, for a suitable choice of the graph $H$. More precisely we characterize, in Theorem \ref{teom}, the distance-balancedness of a wreath product in terms of $p$TS-distance-balancedness of its factors. Finally, we investigate the class of graphs that are $p$TS-distance-balanced for every $p\in[0,1]$, giving several equivalent characterizations in Theorem \ref{fine}. We conclude the paper by asking if this class actually coincides with the class of vertex-transitive graphs (Question \ref{qq}).


\section{$p$TS centrality}\label{secTS}
As a natural generalization of Problem \ref{p1}, suppose that every day each building (vertex) of the city (graph) $G=(V_G,E_G)$, independently, with the same probability $p\in [0,1]$, requires a visit from the facility and with probability $1-p$ does not. An example could be a postoffice with a postman delivering parcels. This setting is similar to that of the \emph{homogeneous probabilistic travelling salesman problem} \cite{gamba}, but here we want to find a location for the postoffice in order to minimize the expectation of the length of a shortest walk starting from the postoffice, visiting at least once each building waiting for a parcel, and finally arriving at the postman's house, that can be on each vertex with the same probability $\frac{1}{n}$ (observe that the postoffice and the postman's house locations may coincide). This set-up is justified if, for example, we have to decide the postoffice location prior to be aware of the location of the postman's house, or for example if every day the postman can be different. We are going to formalize this model in what follows.

\begin{defi}\label{roa}
Let $G=(V_G,E_G)$ be a graph and let $A\subseteq V_G$. We define a map $\rho_A$ on $V_G \times V_G$ such that, for any pair of vertices $u$ and $v$ in $V_G$, the number $\rho_A(u,v)$ is the length of a shortest walk joining  $u$ and $v$, visiting at least once all vertices in $A$.
\end{defi}

\begin{remark}\label{obs}
Let $G=(V_G,E_G)$ be a graph of order $n$, and let $u,v,z\in V_G$ and $A\subseteq V_G$.
We list some properties of the map $\rho_A$; see \cite{cavadonno} for more details.
\begin{itemize}
\item $\rho_{\emptyset}(u,v)=d_G(u,v)$
\item $\rho_A(u,v)=\rho_A(v,u)$ (Symmetry)
\item $\rho_{A\cup B}(u,v)\leq \rho_{A}(u,z)+ \rho_{B}(z,v)$ (Triangle inequality)
\item $B \subseteq A \implies \rho_B(u,v)\leq \rho_A(u,v)$ (Monotonicity)
\item $\rho_A(u,v)<n^2$
\end{itemize}
and combining the first with the third property we have
\begin{equation}\label{rhopiud}
|\rho_{A}(u,z)- \rho_{A}(v,z)|\leq d_G(u,v).
\end{equation}
\end{remark}

Let $G=(V_G,E_G)$  be a graph of order $n$, and let $u,v \in V_G$. A \emph{Hamiltonian path} from $u$ to $v$ in $G$ is a path from $u$ to $v$ visiting each vertex of $G$ exactly once. A \emph{Hamiltonian cycle}  is a Hamiltonian path which is a cycle. A graph is \emph{Hamiltonian} if it admits a Hamiltonian cycle, that is equivalent to say that  $\rho_{V_G}(u,u)=n$ for some, or equivalently, for every $u\in V_G$. A graph $G$ is \emph{Hamilton-connected} if, for every pair $u,v\in V_G$, there exists a Hamiltonian path from $u$ to $v$. It is easy to observe that
$$
\forall u,v\in V_{G},\;\rho_{V_G}(u,v)=
\left\{ \begin{array}{ll}
    n-1 & \hbox{if } u\neq v \\
    n & \hbox{if } u=v
  \end{array}
\right. \iff G \mbox{ is Hamilton-connected.}
$$
The computation of $\rho_{V_G}$ for Hamilton-connected graphs is rather easy; however, to determine $\rho_A$ is in general very hard. This is not the case for the easiest example of Hamilton-connected graph, that is, the complete graph $K_n$.

\begin{example}
Let $K_n = (V_{K_n},E_{K_n})$ be the \emph{complete graph on $n$ vertices}. For every nonempty $A\subseteq V_{K_n}$ and every $u,v\in V_{K_n}$ we have
$$
\rho_A(u,v)=\begin{cases}
|A|+1\; &\mbox{ if } u,v\notin A\\
|A|\;  &\mbox{ if } u\notin A, v\in A \mbox{ or viceversa} \\
|A|-1\;  &\mbox{ if } u,v\in A , u\neq v\\
|A|\;  &\mbox{ if } u=v\in A, |A|>1\\
0 \;  &\mbox{ if } u=v\in A, |A|=1.\\
\end{cases}
$$
\end{example}

The hypothesis that each vertex independently with probability $p$ requires a visit implies that the probability for a given subset $A\subseteq V_G$ to be the random subset waiting for the parcels is
\begin{equation}\label{pA}
p_A:=p^{|A|}(1-p)^{n-|A|}.
\end{equation}

Then we define the quantity $d_G^p(u)$, that is, the expected length of a walk from $u$, visiting the random set $A$ and arriving to the random vertex $v$ (uniformly distributed on $V_G$), as follows:
\begin{equation}\label{dp}
\begin{split}
d_G^p(u):=\frac{1}{n} \sum_{v\in V_G } \sum_{A\subseteq V_G} p_A \; \rho_A(u,v).
\end{split}
\end{equation}
\begin{remark}\label{caso0}${}$\\
If $p=0$ we have $p_A= \begin{cases} 1 \mbox{ if } A=\emptyset \\0 \mbox{ otherwise} \end{cases}$  and $d_G^p(u)=d_G(u)$.\\
If $p=\frac{1}{2}$ we have $p_A=\frac{1}{2^n}$ and ${d_G^p(u)=\frac{1}{2^nn} \sum_{v\in V_G} \sum_{A\subseteq V_G} \rho_A(u,v)}$.\\
If  $p=1$ we have $p_A= \begin{cases} 1 \mbox{ if } A=V_G \\0 \mbox{ otherwise}\end{cases}$ and ${d_G^p(u)=\frac{1}{n} \sum_{v\in V_G }\rho_{V_G}(u,v)}$.
\end{remark}

We are now in position to formulate the $p$TS versions of Problem \ref{p1} and Problem \ref{p2}, respectively.

\begin{problem}\label{tp1}
Find a vertex $u\in V_G$ such that $d_G^p(u)=\min_{v\in V_G} \{d_G^p(v)\}$.
\end{problem}

\begin{problem}\label{tp2}
Find a graph such that Problem \ref{tp1} is solved by any vertex.
\end{problem}

This leads us to introduce a notion of \emph{medianity} in this setting, as a solution of the above mentioned problems.

\begin{defi}
In a graph  $G=(V_G,E_G)$ a vertex $u\in V_G$ is  \emph{$p$TS-median}  if it solves Problem \ref{tp1}.  The graph $G$ is \emph{self-$p$TS-median} if it solves Problem \ref{tp2}.
\end{defi}

\begin{remark}
Notice that, as a consequence of Remark \ref{caso0}, when $p=0$ the Problem \ref{p1} and Problem \ref{p2} and their $p$TS versions, Problem \ref{tp1} and Problem \ref{tp2} respectively, are equivalent.
\end{remark}

In analogy with Equation \eqref{Wuv}, for any subset $A\subseteq V_G$ and any pair of adjacent vertices $u,v\in V_G$, we define the vertex subsets
$$
W_{uv}^A := \{z\in V_G\ | \ \rho_A(z,u) < \rho_A(z,v)\},
$$
and the expectation of their cardinality is
\begin{equation}\label{TWuv}
 w^p_{uv}:= \sum_{A\subseteq V_G}  p_A |W_{uv}^A|.
\end{equation}
A natural generalization of the distance-balancedness is given in the following definition.

\begin{defi}\label{Tdb}
A graph $G=(V_G,E_G)$ is \emph{$p$TS-distance-balanced} if $w_{uv}^p=w_{vu}^p$, for every pair of adjacent vertices $u,v\in V_G$.
A graph $G$ is \emph{TS-distance-balanced} if it is $p$TS-distance-balanced for each $p \in [0,1]$.
\end{defi}

The following is the  $p$TS-version of Theorem \ref{median}.
\begin{theorem}\label{Tmedia}
A graph  $G=(V_G,E_G)$ is $p$TS-distance-balanced if and only if it is self-$p$TS-median.
\end{theorem}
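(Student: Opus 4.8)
The plan is to transcribe the proof of the classical Theorem~\ref{median}, the only genuinely new ingredient being the control of the increments of $\rho_A$ along an edge. Concretely, I would first establish the identity
\begin{equation*}
n\bigl(d_G^p(u)-d_G^p(v)\bigr)=w_{vu}^p-w_{uv}^p\qquad\text{for every pair of adjacent vertices }u,v\in V_G.
\end{equation*}
Starting from the definition \eqref{dp} and interchanging the (finite) sums, together with the symmetry $\rho_A(x,y)=\rho_A(y,x)$ recorded in Remark~\ref{obs}, one gets
\begin{equation*}
n\bigl(d_G^p(u)-d_G^p(v)\bigr)=\sum_{A\subseteq V_G}p_A\sum_{z\in V_G}\bigl(\rho_A(z,u)-\rho_A(z,v)\bigr).
\end{equation*}

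The key point is then the following dichotomy. Since $u\sim v$ we have $d_G(u,v)=1$, so inequality \eqref{rhopiud} gives $|\rho_A(z,u)-\rho_A(z,v)|\le 1$; because $\rho_A$ takes integer values, the increment $\rho_A(z,u)-\rho_A(z,v)$ lies in $\{-1,0,1\}$, and it equals $1$ precisely when $z\in W_{vu}^A$, equals $-1$ precisely when $z\in W_{uv}^A$, and equals $0$ otherwise. Hence $\sum_{z\in V_G}\bigl(\rho_A(z,u)-\rho_A(z,v)\bigr)=|W_{vu}^A|-|W_{uv}^A|$, and summing against the weights $p_A$ while recalling \eqref{TWuv} yields the claimed identity.

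From here the theorem follows by the usual connectedness argument. If $G$ is self-$p$TS-median, then $d_G^p$ is constant on $V_G$ (every vertex attains the minimum), so the left-hand side of the identity vanishes on every edge, giving $w_{uv}^p=w_{vu}^p$, i.e.\ $G$ is $p$TS-distance-balanced. Conversely, if $G$ is $p$TS-distance-balanced, then $d_G^p(u)=d_G^p(v)$ whenever $u\sim v$; since $G$ is connected, $d_G^p$ is constant on $V_G$, so every vertex solves Problem~\ref{tp1} and $G$ is self-$p$TS-median.

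I do not expect a serious obstacle: the argument is a structural copy of the proof of Theorem~\ref{median}, and the only step requiring care is the $\{-1,0,1\}$ dichotomy for the edge-increments of $\rho_A$, which is exactly what \eqref{rhopiud} (together with integrality of walk lengths) is designed to supply. It is worth sanity-checking the boundary cases $p\in\{0,1\}$ against Remark~\ref{caso0} — where the statement reduces to Theorem~\ref{median} and to a Hamiltonian-type statement, respectively — but these are covered by the same computation.
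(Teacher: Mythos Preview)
Your proposal is correct and follows essentially the same approach as the paper: both establish the edge identity $n\bigl(d_G^p(u)-d_G^p(v)\bigr)=w_{vu}^p-w_{uv}^p$ via the $\{-1,0,1\}$ dichotomy for $\rho_A(z,u)-\rho_A(z,v)$ coming from \eqref{rhopiud}, and then invoke connectedness. Your write-up is slightly more explicit about integrality and about deducing both directions from the identity, but the argument is the same.
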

\begin{proof}
Observe that, as the graph $G=(V_G,E_G)$ is connected, the statement is proved if we show that, for every pair of adjacent vertices $u,v\in V_G$, one has:
\begin{eqnarray}\label{pluto}
d_G^p(u)-d_G^p(v)=\frac{1}{n}(w^p_{vu}-w^p_{uv}).
\end{eqnarray}
Now, by the definition of $d_G^p(u)$ in Equation \eqref{dp}, we get
\begin{align*}
d_G^p(u)-d_G^p(v)&=\frac{1}{n} \sum_{z\in V_G } \sum_{A\subseteq V_G}  p_A (\rho_A(u,z)-\rho_A(v,z))\\
&=  \frac{1}{n} \sum_{A\subseteq V_G}  p_A  \sum_{z\in V_G }(\rho_A(u,z)-\rho_A(v,z))= \frac{1}{n} \sum_{A\subseteq V_G}  p_A  (|W_{vu}^A|-|W_{uv}^A|)
\end{align*}
since, being $u$ and $v$ adjacent, by virtue of Equation \eqref{rhopiud}, we have
$$
\rho_A(u,z)-\rho_A(v,z)=\begin{cases}
1 \quad \mbox{ if } z\in\, W_{vu}^A &\\
-1 \quad \mbox{ if } z\in\, W_{uv}^A &\\
0 \quad \mbox{ otherwise. } &\\
\end{cases}
$$
Finally, by Equation \eqref{TWuv}, we have
$$
d_G^p(u)-d_G^p(v)= \frac{1}{n} \sum_{A\subseteq V_G}  p_A  (|W_{vu}^A|-|W_{uv}^A|)=\frac{1}{n} (w^p_{vu}-w^p_{uv}),
$$
that proves Equation \eqref{pluto}.
\end{proof}

\begin{remark}\label{obs2}
As we have already observed,
$$
G \mbox{ is distance-balanced} \iff G \mbox{ is {\scriptsize $0$}TS-distance-balanced}.
$$
Moreover, if, for two given vertices $u,v\in V_G$, there exists $\varphi\in \operatorname{Aut}(G)$ such that $\varphi(u)=v$, one has $d_G^p(u)=d_G^p(v)$, for every $p\in[0,1]$. This implies
$$
G \mbox{ is vertex-transitive} \implies G \mbox{ is TS-distance-balanced}.
$$
On the other hand, when $p=1$, Hamilton-connected graphs satisfy $d_G^1(u)=n-1+\frac{1}{n}$ by Remark \ref{caso0} for every vertex $u\in V_G$, and then:
$$
G \mbox{ is Hamilton-connected} \implies G \mbox{ is {\scriptsize $1$}TS-distance-balanced.}
$$
Notice that the converse of the last implication is not true, since there exist graphs which are vertex-transitive but not Hamilton-connected (e.g., cyclic graphs).
\end{remark}

\begin{example}\label{ewheel}
The graph $W_7$ depicted in Figure \ref{figwheel7} is the \emph{Wheel graph} on $7$ vertices. Being Hamilton-connected, the graph $W_7$ is {\scriptsize $1$}TS-distance-balanced. Clearly, it is not distance-balanced and then it is not {\scriptsize $0$}TS-distance-balanced.
\\By an explicit computation (brute-force) we computed $d^{1/2}_{W_7}(u)=\frac{1}{7\cdot 2^7} \cdot 3842$, whether $u$ is the central vertex or it belongs to the external cycle. As a consequence, the graph $W_7$ is {\scriptsize $\frac{1}{2}$}TS-distance-balanced. We found quite surprising that this graph, that has a so recognizable central vertex, presents such an equality property.
\begin{figure}[h]
\begin{center}
\includegraphics[width=0.35\textwidth]{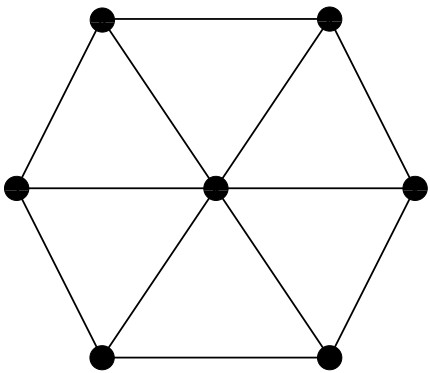}
\end{center}\caption{The Wheel graph $W_7$ on $7$ vertices.} \label{figwheel7}
\end{figure}
\end{example}

\begin{example}\label{epiccolo}
Let $H_9$ be the graph on $9$ vertices depicted in Figure \ref{figstrangegraph}. This graph has been introduced in \cite{dist} as the smallest example of a non-regular distance-balanced graph. In particular, it is {\scriptsize $0$}TS-distance-balanced, but an explicit computation gives $d^{1/2}_{H_9}(v_1)= \frac{1}{9\cdot2^9}\cdot 26688$ and $d^{1/2}_{H_9}(v_2)=\frac{1}{9\cdot2^9}\cdot 26656$. By virtue of Theorem \ref{Tmedia}, it is not {\scriptsize $\frac{1}{2}$}TS-distance-balanced.
\begin{figure}[h]
\begin{center}
\psfrag{v1}{$v_1$}\psfrag{v2}{$v_2$}
\includegraphics[width=0.35\textwidth]{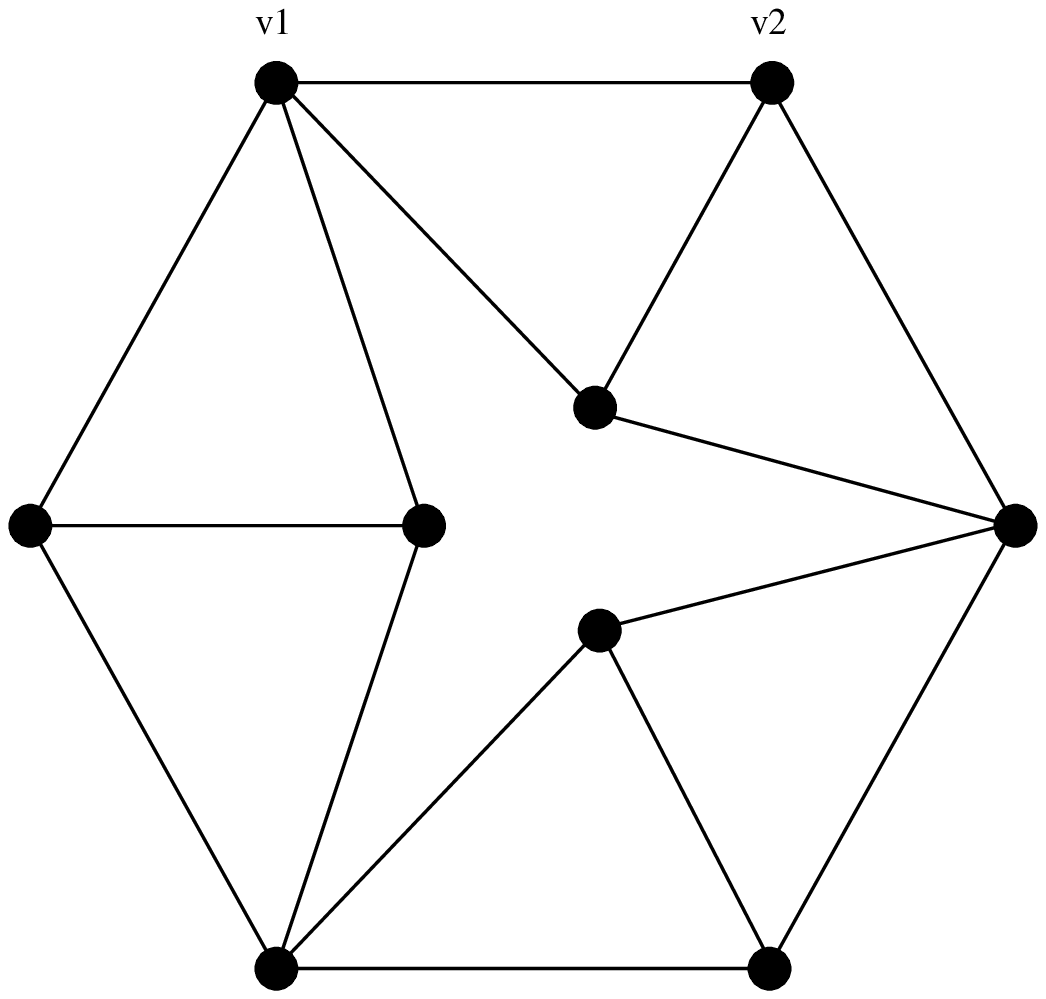}
\end{center}\caption{The graph $H_9$.} \label{figstrangegraph}
\end{figure}
\end{example}

\section{$p$TS-distance-balancedness and wreath product of graphs}\label{wre}

We start this section by recalling the definition of the wreath product of two graphs.

\begin{defi}\label{defierschler}
Let $G=(V_G, E_G)$ and $H=(V_H,E_H)$ be two graphs. Let us fix an enumeration of the vertices of $G$ so that $V_G=\{x_1,x_2,\ldots,x_{n}\}$. The \textit{wreath product} $G\wr
H$ is the graph with vertex set $V_H^{V_G}\times V_G=\{(y_1,\ldots, y_n)x_i \ | \ x_i \in V_G,  y_j\in V_H, j=1,\ldots, n \}$, where two vertices $u=(y_1,\ldots, y_{n})x_i$ and $v=(y'_1,\ldots, y'_{n})x_k$ are connected by an edge if:
\begin{itemize}
\item (edges of type I) either $i=k$ and $y_j=y'_j$ for every $j\neq i$, and $y_{i}\sim y'_{i}$ in $H$;
\item (edges of type II) or $y_j=y'_j$, for every $j=1,\dots, n$, and $x_i\sim x_k$ in $G$.
\end{itemize}
\end{defi}
It follows that if $|V_G|=n$ and $|V_H|=m$, the graph $G\wr H$ has $nm^n$ vertices. It is proved that  $G\wr H$ is connected, bipartite or vertex-transitive, if and only if both $G$ and $H$ are, respectively, connected, bipartite or vertex-transitive \cite{cavadonno}. Moreover, if $G$ is a regular graph of degree $r_G$ and $H$ is a regular graph of degree $r_H$, then the graph $G\wr H$ is an $(r_G+r_H)$-regular graph.

There exists a practical and clarifying interpretation of the graph wreath product, given by the \emph{Lamplighter random walk} \cite{woesssmall}.
Suppose that a lamplighter moves along $G$, so that each vertex of $G$ represents a possible \emph{position} of the lamplighter: at each vertex of $G$, there is a \emph{lamp}. The vertices of the graph $H$ represent the possible \emph{colors} of each lamp. Therefore, a vertex $(y_1,\ldots, y_{n})x_i$ in $G\wr H$ can be regarded as a \emph{configuration} of colors $(y_1,\ldots, y_{n})$ (each $y_j$ is from $V_H$) together with a particular position $x_i$ (from $V_G$) of the lamplighter: the lamp at the position $x_j\in V_G$ has the color $y_j \in V_H$. Two vertices of $G\wr H$ are connected if either they share the same configuration of colors and have adjacent positions for the lamplighter in $G$ (such an edge of type II corresponds to the situation of the lamplighter moving to a neighbor vertex in $G$ but leaving all lamps unchanged); or they share the same position but the configuration of colors differ, by two adjacent colors, exactly for the lamp associated at that position (such an edge of type I corresponds to the situation of the lamplighter changing the color of the lamp, to an adjacent color in $H$, in the position where he stays). For this reason, the wreath product $G\wr H$ is also called the Lamplighter graph, with base graph $G$ and color graph $H$.

The Lamplighter interpretation allows us to highlight the relationship between the wreath product of graphs and Problems \ref{tp1} and \ref{tp2}. We explicit this connection in the two following lemmas, where the distance and the normalized total distance in $G\wr H$ are expressed in terms of distance and normalized total distance in $H$, and of their $TS$-version in $G$.

\begin{lemma}[\cite{cavadonno}]\label{dista}
Let $G=(V_G,E_G)$ and $H=(V_H,E_H)$ be two graphs of order $n$ and $m$, respectively. For any vertices $u=(y_1,\ldots,y_n)x$, $v=(y_1',\ldots, y_n')x'\in V_{G\wr H}$, we have:
\begin{eqnarray*}
d_{G\wr H}(u,v)=\sum_{i=1}^n d_{H}(y_i,y_i')+\rho_{\delta(u,v)}(x,x'),
\end{eqnarray*}
where $\delta(u,v):= \{x_i\in V_G: y_i\neq y'_i \}.$
\end{lemma}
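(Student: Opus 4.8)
The plan is to prove the two inequalities $d_{G\wr H}(u,v)\le S$ and $d_{G\wr H}(u,v)\ge S$ separately, where $S:=\sum_{i=1}^n d_H(y_i,y_i')+\rho_{\delta(u,v)}(x,x')$, exploiting the Lamplighter description attached to Definition \ref{defierschler}: every edge of $G\wr H$ is either of type I, in which case the lamplighter stays at some position $x_i$ and recolours the lamp there to an $H$-neighbour of its current colour, or of type II, in which case the lamplighter moves along an edge of $G$ and all colours are left unchanged. I will use repeatedly that for $i\notin\delta(u,v)$ one has $y_i=y_i'$ and $d_H(y_i,y_i')=0$, so that $S=\rho_{\delta(u,v)}(x,x')+\sum_{i\in\delta(u,v)}d_H(y_i,y_i')$.

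For the upper bound I would exhibit an explicit walk from $u$ to $v$ of length $S$. Fix a shortest walk $x=z_0,z_1,\dots,z_\ell=x'$ in $G$ visiting every vertex of $\delta(u,v)$, so $\ell=\rho_{\delta(u,v)}(x,x')$; such a walk exists by Definition \ref{roa}. Let the lamplighter traverse $z_0,z_1,\dots,z_\ell$ via type II edges, and, the first time this walk reaches a vertex $x_i\in\delta(u,v)$, insert $d_H(y_i,y_i')$ type I edges realising an $H$-geodesic from $y_i$ to $y_i'$ at the lamp in position $x_i$. Since every vertex of $\delta(u,v)$ is visited, every lamp that must change does change, lamps outside $\delta(u,v)$ are never touched, and the lamplighter ends at $x'$; hence this walk joins $u$ to $v$ and has length $\ell+\sum_{i\in\delta(u,v)}d_H(y_i,y_i')=S$.

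For the lower bound I would take an arbitrary walk $\gamma$ from $u$ to $v$ in $G\wr H$ and partition its edges by type, subdividing the type I edges according to the position $x_i$ of the affected lamp. Erasing the type I edges and reading off the successive lamplighter positions produces a walk in $G$ from $x$ to $x'$ of length equal to the number of type II edges of $\gamma$; for each $i\in\delta(u,v)$ this walk must pass through $x_i$, since the lamp at $x_i$ changes colour during $\gamma$ and only a type I edge at position $x_i$ can do so, hence the number of type II edges is at least $\rho_{\delta(u,v)}(x,x')$. Dually, for fixed $i$, listing the colours of the lamp at $x_i$ immediately before, between, and after the type I edges at that position gives a walk in $H$ from $y_i$ to $y_i'$, so the number of such edges is at least $d_H(y_i,y_i')$. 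Adding these bounds over all positions together with the count of type II edges shows that the length of $\gamma$ is at least $S$; minimising over $\gamma$ gives $d_{G\wr H}(u,v)\ge S$, and the two bounds combine to the asserted identity.

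The only genuinely delicate point is the bookkeeping in the lower bound: one must verify that the type I edges at a fixed position $x_i$, read in the order in which they appear along $\gamma$, really do form a walk in $H$ whose first vertex is $y_i$ and whose last vertex is $y_i'$ (the colour at $x_i$ equals $y_i$ before the first of these edges and $y_i'$ after the last, and is constant between consecutive ones because type I edges at other positions and all type II edges leave lamp $i$ untouched), and, symmetrically, that deleting the type I edges yields a legitimate walk in $G$ that meets every vertex of $\delta(u,v)$. Everything else — the existence of an optimising walk in $G$ realising $\rho_{\delta(u,v)}(x,x')$, the existence of $H$-geodesics, and the vanishing of the summands with $i\notin\delta(u,v)$ — is routine.
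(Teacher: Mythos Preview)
Your argument is correct and is precisely the standard two-inequality proof for distances in lamplighter graphs: exhibit a walk of length $S$ for the upper bound, and decompose an arbitrary walk into its type~I and type~II edges for the lower bound. The bookkeeping you flag as delicate is handled correctly.

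Note, however, that the paper does not actually prove this lemma: it is quoted verbatim from \cite{cavadonno} and used as a black box, so there is no in-paper proof to compare against. Your approach is exactly the one taken in the cited reference.
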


Observe that the sum $\sum_{i=1}^n d_{H}(y_i,y_i')$ can be interpreted as the geodesic distance in the $n$-th iterated Cartesian product of $H$ with itself. Moreover, in the Lamplighter interpretation, the subset $\delta(u,v)$ consists of those vertices $x_i$ of the base graph $G$ where the color of the associated lamps $y_i$ and $y'_i$ does not coincide. In other words, $\delta(u,v)$ is the set of the vertices of $G$ that the lamplighter has to visit in order to move from the lamp configuration of $u$ to that of $v$.

Notice that fixing a vertex $u$ in $G\wr H$ and considering a random vertex $v=(y_1',\ldots, y_n')x'$, with uniform probability $\frac{1}{n m^n}$ in $V_{G\wr H}$, induces a random subset $\delta(u,v)$ of $V_G$ and a random vertex $x'$ in $V_G$. The probability of the random subset $\delta(u,v)$ coincides with that given by Equation \eqref{pA} when $p=\frac{m-1}{m}$. This means that the model considered in Section \ref{secTS} can be simply derived by assigning a uniform probability distribution to the vertices of the wreath product. This is formally proved in Lemma \ref{ww}.

\begin{lemma}\label{ww}
Let $G=(V_G,E_G)$ and $H=(V_H,E_H)$ be two graphs of order $n$ and $m$, respectively. Let $u=(y_1,\ldots y_n) x\in V_{G\wr H}$ and $p=\frac{m-1}{m}$. Then:
\begin{equation*}\label{wiwr}
d_{G\wr H}(u)= \sum_{i=1}^n  d_H(y_i) + d^p_G(x).
\end{equation*}
\end{lemma}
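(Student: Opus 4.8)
The plan is to unfold the definition of the normalized total distance on $G\wr H$, substitute the formula of Lemma \ref{dista}, and split the resulting double sum into a contribution coming from the lamp configurations and a contribution coming from the lamplighter's position. Concretely, writing $v=(y_1',\ldots,y_n')x'$ for a generic vertex of $G\wr H$ and using that $G\wr H$ has $nm^n$ vertices, one starts from
\[
d_{G\wr H}(u)=\frac{1}{nm^n}\sum_{v\in V_{G\wr H}} d_{G\wr H}(u,v)
=\frac{1}{nm^n}\sum_{v}\sum_{i=1}^n d_H(y_i,y_i')+\frac{1}{nm^n}\sum_{v}\rho_{\delta(u,v)}(x,x'),
\]
and treats the two summands separately.

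For the first summand, fix an index $i$: the term $d_H(y_i,y_i')$ depends on $v$ only through its $i$-th lamp coordinate $y_i'$, and for each fixed value of $y_i'\in V_H$ there are exactly $nm^{n-1}$ vertices $v$ (free choice of the position $x'$ and of the remaining $n-1$ lamp coordinates). Hence $\sum_{v} d_H(y_i,y_i')=nm^{n-1}\sum_{y_i'\in V_H}d_H(y_i,y_i')=nm^{n}\,d_H(y_i)$, so after summing over $i$ and dividing by $nm^n$ this summand equals $\sum_{i=1}^n d_H(y_i)$.

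For the second summand, the key move is to reindex the sum over $v$ by the pair $(A,x')$, where $A=\delta(u,v)\subseteq V_G$. For a fixed $A$, a configuration $(y_1',\ldots,y_n')$ produces $\delta(u,v)=A$ exactly when $y_i'\ne y_i$ for every $x_i\in A$ and $y_i'=y_i$ otherwise, so there are precisely $(m-1)^{|A|}$ such configurations, while $x'$ still ranges over all of $V_G$. Therefore
\[
\frac{1}{nm^n}\sum_{v}\rho_{\delta(u,v)}(x,x')
=\frac{1}{n}\sum_{A\subseteq V_G}\frac{(m-1)^{|A|}}{m^{n}}\sum_{x'\in V_G}\rho_A(x,x').
\]
Since $p=\frac{m-1}{m}$ satisfies $1-p=\frac1m$, one has $\frac{(m-1)^{|A|}}{m^n}=p^{|A|}(1-p)^{n-|A|}=p_A$, so the right-hand side is exactly $d_G^p(x)$ by Equation \eqref{dp}. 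Adding the two summands yields the stated identity.

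The argument is essentially bookkeeping, and the only point that requires a little attention is the reindexing in the second summand: one must verify that partitioning the $nm^n$ vertices $v$ according to the induced subset $\delta(u,v)$ reproduces exactly the binomial weights $p_A$ of Equation \eqref{pA} for the parameter $p=\frac{m-1}{m}$ — which is the heuristic already noted just before the statement of the lemma. No genuine obstacle is expected.
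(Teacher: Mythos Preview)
Your argument is correct. The only difference from the paper's proof is one of packaging: the paper does not carry out the double-sum bookkeeping itself but instead quotes the identity
\[
n m^n d_{G\wr H}(u)= n m^{n-1}\sum_{i=1}^n  m\, d_H(y_i) + \sum_{A\subseteq V_G} (m-1)^{|A|} \sum_{x'\in V_G} \rho_A(x,x')
\]
from an external source (Theorem~17 in \cite{scozzari}), and then performs the same final step you do, namely dividing by $nm^n$ and recognizing $(m-1)^{|A|}/m^n$ as $p_A$ for $p=\frac{m-1}{m}$. Your proof is therefore a self-contained derivation, directly from Lemma~\ref{dista}, of the very formula the paper imports; what you gain is independence from the external reference, what the paper gains is brevity. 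The substance is the same.
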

\begin{proof}
From Theorem 17 in \cite{scozzari}, where the focus is on the non-normalized total distance, we have:
$$
n m^n d_{G\wr H}(u)= n m^{n-1}\sum_{i=1}^n  m d_H(y_i) + \sum_{A\subseteq V_G} (m-1)^{|A|} \sum_{x'\in V_G} \rho_A(x,x').
$$
Taking into account that $p=\frac{m-1}{m}$, we obtain:
\begin{eqnarray*}
d_{G\wr H}(u)&=& \sum_{i=1}^n  d_H(y_i) + \frac{1}{n}  \sum_{A\subseteq V_G} \sum_{x'\in V_G} \frac{(m-1)^{|A|} }{m^n}  \rho_A(x,x')\\
&= &\sum_{i=1}^n  d_H(y_i) + \frac{1}{n}  \sum_{A\subseteq V_G} \sum_{x'\in V_G} p^{|A|} (1-p)^{n-|A|}  \rho_A(x,x').
\end{eqnarray*}
The claim follows by using Equations \eqref{pA} and \eqref{dp}.
\end{proof}


In the following theorem we give necessary and sufficient conditions  for a vertex of a wreath product to be median and for the wreath product itself to be distance-balanced.

\begin{theorem}\label{teom}
Let $G=(V_G,E_G)$ and $H=(V_H,E_H)$ be two graphs with $|V_G|=n$, $|V_H|=m$. Let $u=(y_1,\ldots, y_n) x\in V_{G\wr H}$, and $p=\frac{m-1}{m}$. Then $u$ is median in $G\wr H$ if and only if $y_1,\dots,y_n$ are median in $H$  and  $x$ is $p$TS-median in $G$.
In particular:
\begin{equation*}
\begin{split}
G& \mbox{ is $p$TS-distance-balanced }\\
&H  \mbox{ is distance-balanced }
\end{split}
 \iff G\wr H \mbox{ is distance-balanced }.
\end{equation*}
\end{theorem}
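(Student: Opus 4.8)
The plan is to read everything off Lemma~\ref{ww}, which decouples $d_{G\wr H}$ into independent contributions from the two factors. Fix a vertex $u=(y_1,\dots,y_n)x\in V_{G\wr H}$ and set $p=\frac{m-1}{m}$; by Lemma~\ref{ww},
\[
d_{G\wr H}(u)=\sum_{i=1}^n d_H(y_i)+d_G^p(x).
\]
A general vertex $v=(y_1',\dots,y_n')x'$ of $G\wr H$ is obtained by choosing each $y_i'\in V_H$ and $x'\in V_G$ freely and independently, so minimizing the right-hand side over $V_{G\wr H}$ splits into $n+1$ independent one-variable minimizations:
\[
\min_{v\in V_{G\wr H}} d_{G\wr H}(v)=\sum_{i=1}^n \Big(\min_{y\in V_H} d_H(y)\Big)+\min_{x'\in V_G} d_G^p(x').
\]
Since each summand in the expression for $d_{G\wr H}(u)$ is bounded below by the corresponding minimum, the equality $d_{G\wr H}(u)=\min_v d_{G\wr H}(v)$ holds if and only if it holds termwise, i.e.\ if and only if $d_H(y_i)=\min_{y\in V_H} d_H(y)$ for every $i$ and $d_G^p(x)=\min_{x'\in V_G} d_G^p(x')$. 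This is precisely the assertion that each $y_i$ is median in $H$ and $x$ is $p$TS-median in $G$, which establishes the first part of the theorem.

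For the ``in particular'' statement I would combine this characterization with Theorem~\ref{median} and Theorem~\ref{Tmedia}. By Theorem~\ref{median}, $G\wr H$ is distance-balanced if and only if it is self-median, i.e.\ if and only if every vertex $u=(y_1,\dots,y_n)x$ is median in $G\wr H$. By the first part this happens if and only if, for every such $u$, all $y_i$ are median in $H$ and $x$ is $p$TS-median in $G$. As $u$ ranges over $V_{G\wr H}$ each coordinate $y_i$ ranges over all of $V_H$ and $x$ ranges over all of $V_G$, so this condition is equivalent to requiring that every vertex of $H$ be median and every vertex of $G$ be $p$TS-median, i.e.\ that $H$ be self-median and $G$ be self-$p$TS-median. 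Applying Theorem~\ref{median} to $H$ and Theorem~\ref{Tmedia} to $G$ then yields the claimed equivalence.

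I do not anticipate a genuine obstacle: the substance is already packaged in Lemma~\ref{ww}. The only step deserving a little care is the ``separation of variables'' above — spelling out that the minimum of the sum decouples into the sum of the minima (achievable since all tuples occur as vertices of $G\wr H$) and that the coordinates of a varying vertex of $G\wr H$ independently exhaust $V_H$ and $V_G$. The degenerate case $m=1$ is consistent with the statement, since then $p=0$, $G\wr H\cong G$, the one-vertex graph $H$ is trivially distance-balanced, and $0$TS-distance-balancedness coincides with distance-balancedness by Remark~\ref{obs2}.
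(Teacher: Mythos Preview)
Your proposal is correct and follows essentially the same approach as the paper: both arguments rest entirely on Lemma~\ref{ww} and the observation that the coordinates of a vertex of $G\wr H$ can be varied independently, so that minimizing $d_{G\wr H}$ decouples into minimizing each $d_H(y_i)$ and $d_G^p(x)$ separately. The paper phrases this via contrapositive (replacing a non-median coordinate to produce a strictly smaller total distance), while you state the separation of variables directly; you also spell out the ``in particular'' via Theorems~\ref{median} and~\ref{Tmedia}, which the paper leaves implicit.
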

\begin{proof}
Suppose that $y_{i^*}$ is not median in $H$, so that there exists $\bar{y}\in V_H$ such that $d_H(\bar{y})<d_H(y_{i^*})$. Denoting by $u'$ the vertex $(y_1,\ldots ,y_{i^*-1}, \bar{y},y_{i^*+1},\ldots, y_n)x\in V_{G\wr H}$, by Lemma \ref{ww} we have
$d_{G\wr H}(u')<d_{G\wr H}(u)$, and then $u$ is not median in $G\wr H$.\\
Similarly, suppose that $x$ is not $p$TS-median in $G$, so that there exists $\bar{x}\in V_G$ such that $d^p_G(\bar{x})<d^p_G(x)$. Denoting by $u''$ the vertex $(y_1,\ldots, y_n) \bar{x}\in V_{G\wr H}$, by Lemma \ref{ww} again we have $d_{G\wr H}(u'')<d_{G\wr H}(u)$, and then $u$ is not median in $G\wr H$.\\
Viceversa, suppose that $u$ is not median in $G\wr H$ and then $d_{G\wr H}(u)$ is not minimal, then one among $\{d_H(y_i)\}_{i=1,\ldots, n}$ or $d_G^p(x)$ cannot be minimal, and the statement follows.
\end{proof}

\begin{corollary}
If $H$ and $H'$ are two distance-balanced graphs of the same order, then $G\wr H$ is distance-balanced if and only if $G\wr H'$ is distance-balanced.
\end{corollary}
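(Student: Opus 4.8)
The plan is to read the statement straight off Theorem \ref{teom}. Write $m := |V_H| = |V_{H'}|$ and set $p := \frac{m-1}{m}$; since $H$ and $H'$ have the same order, the auxiliary probability supplied by Theorem \ref{teom} is the \emph{same} number $p$ in both cases. This is the only point at which the hypothesis ``same order'' is used, and it is really the whole content of the corollary: the distance-balancedness of a wreath product $G \wr K$ depends on the color graph $K$ only through its order (which pins down $p$) together with the single bit of information ``is $K$ distance-balanced''. So once we fix that bit to be ``yes'', the criterion reduces to a condition on $G$ alone.

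Concretely, I would argue as follows. Apply Theorem \ref{teom} to $G \wr H$: the graph $G \wr H$ is distance-balanced if and only if $G$ is $p$TS-distance-balanced and $H$ is distance-balanced. Since $H$ is distance-balanced by hypothesis, this equivalence collapses to ``$G \wr H$ is distance-balanced $\iff$ $G$ is $p$TS-distance-balanced''. Running the identical argument with $H'$ in place of $H$ is legitimate precisely because $|V_{H'}| = m$ produces the same $p$ and because $H'$ is distance-balanced by hypothesis, and it gives ``$G \wr H'$ is distance-balanced $\iff$ $G$ is $p$TS-distance-balanced''. Chaining the two equivalences through the common middle term ``$G$ is $p$TS-distance-balanced'' yields the claim. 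I do not expect any obstacle: the substantive work is entirely inside Theorem \ref{teom}, and the corollary is just the remark that the factor condition on $G$ is insensitive to which particular distance-balanced graph of order $m$ is used as the second factor.
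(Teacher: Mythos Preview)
Your proposal is correct and is exactly the intended argument: the paper states the corollary immediately after Theorem~\ref{teom} without proof, treating it as a direct consequence. Your observation that the criterion in Theorem~\ref{teom} depends on the second factor only through its order (via $p=\frac{m-1}{m}$) and its distance-balancedness is precisely the point.
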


\begin{remark}
Another consequence of Theorem \ref{teom} is the equivalence of the TS-problems for $G$ with the classical problems for $G\wr H$, where $H$ is any distance-balanced graph. More precisely, let $H=(V_H,E_H)$ be a distance-balanced graph of order $m$, and $p=\frac{m-1}{m}$. Then we have:
\begin{equation*}
\begin{split}
\begin{split} &x\in V_G\\ \mbox{ is solution} &\mbox{ of  Problem \ref{tp1}}\end{split} \quad &\iff\quad  \begin{split}(y_1,\ldots&,y_n)x\in V_{G\wr H} \\ \mbox{ is solution} &\mbox{ of Problem \ref{p1}}\end{split}
\\\\
\begin{split}
&G \\
\mbox{is solution } &\mbox{of Problem \ref{tp2}}
\end{split}
\quad &\iff \quad
\begin{split}
&G\wr H \\
\mbox{is solution} &\mbox{ of Problem \ref{p2}}
\end{split}
\end{split}
\end{equation*}
\end{remark}

\begin{example}
We know from Example \ref{ewheel} that the graph $W_7$ is {\scriptsize $\frac{1}{2}$}TS-distance-balanced. By virtue of Theorem \ref{teom}, the graph $W_7\wr K_2$ is distance-balanced. Moreover, the graph $W_7\wr K_2$ has order $896$, it is non-regular (since $W_7$ is non-regular), and it is not bipartite (since $W_7$ is not bipartite). On the other hand, a direct computation shows that the cardinality of $W_{uv}^{W_7\wr K_2}$ is not constant on the set of edges, so that the graph $W_7\wr K_2$ is not \emph{nicely distance-balanced} (see \cite{nice}).
\end{example}

\begin{example}
We know from Example \ref{epiccolo} that the distance-balanced graph $H_9$ is not {\scriptsize $\frac{1}{2}$}TS-distance-balanced. As a consequence,
the graph $H_9\wr K_2$ is not distance-balanced.
\end{example}
\begin{corollary}
$G\wr H$ distance-balanced $\notimplies$ $G$ distance-balanced;\\
$G,H$ distance-balanced $\notimplies$  $G\wr H$ distance-balanced.
\end{corollary}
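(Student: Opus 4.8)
The plan is to deduce both non-implications directly from Theorem \ref{teom} by exhibiting the two wreath products already analysed in Examples \ref{ewheel} and \ref{epiccolo}. Note first that $K_2$ has order $m=2$, so the relevant probability in Theorem \ref{teom} is $p=\frac{m-1}{m}=\frac12$, and $K_2$ is trivially distance-balanced (it is vertex-transitive; or directly $|W_{uv}^{K_2}|=|W_{vu}^{K_2}|=1$ for its unique edge).

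For the first non-implication, take $G=W_7$ and $H=K_2$. By Example \ref{ewheel}, $W_7$ is $\frac12$TS-distance-balanced, while $K_2$ is distance-balanced; hence, since $p=\frac12$, Theorem \ref{teom} yields that $W_7\wr K_2$ is distance-balanced. On the other hand $W_7$ itself is not distance-balanced (its central vertex has strictly smaller normalized total distance than the rim vertices, equivalently it is not $0$TS-distance-balanced, as recorded in Example \ref{ewheel}). Thus $W_7\wr K_2$ distance-balanced does not force $W_7$ distance-balanced.

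For the second non-implication, take $G=H_9$ and $H=K_2$. Both are distance-balanced ($H_9$ by \cite{dist}, as in Example \ref{epiccolo}; $K_2$ as above). By Example \ref{epiccolo}, $H_9$ is not $\frac12$TS-distance-balanced; since $p=\frac12$, the ``$\Leftarrow$'' direction of the equivalence in Theorem \ref{teom} (contrapositive of the ``$\Rightarrow$'' direction) shows that $H_9\wr K_2$ is not distance-balanced. Hence $G,H$ distance-balanced does not force $G\wr H$ distance-balanced.

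The only content beyond invoking Theorem \ref{teom} is the verification that $W_7$ is $\frac12$TS-distance-balanced and that $H_9$ is not, i.e. the evaluations of $d^{1/2}$ at the two orbit-representatives in each graph; but these are exactly the (brute-force) computations already carried out in Examples \ref{ewheel} and \ref{epiccolo}, so there is no real obstacle here. If one wished to avoid the brute-force step, the difficulty would be purely in locating smaller or more structural witnesses for the same phenomena, which is not needed for the corollary as stated.
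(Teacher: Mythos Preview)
Your proposal is correct and follows essentially the same approach as the paper: the corollary is deduced from the two examples immediately preceding it, namely $W_7\wr K_2$ (distance-balanced although $W_7$ is not) and $H_9\wr K_2$ (not distance-balanced although both $H_9$ and $K_2$ are), both obtained via Theorem \ref{teom} with $p=\tfrac12$.
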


We conclude this section by investigating the class of graphs $G$ such that $G\wr H$ is distance-balanced whenever $H$ is distance-balanced. By virtue of Theorem \ref{teom}, this class must contain the class of TS-distance-balanced graphs. The two classes actually coincide, as we will prove in Theorem \ref{fine}. We need a preliminary definition and lemma.

\begin{defi}\label{tdv}
The \emph{total distance vector of} the vertex $u\in V_G$ is the $(n+1)$-vector
$$
W_{\rho}(u,G)=(W_{\rho_0}(u,G), W_{\rho_1}(u,G),\ldots, W_{\rho_n}(u,G)),
$$
where, for each $k\in \{0,1,\ldots,n\}$, we set
$$
W_{\rho_k}(u,G):=\sum_{A\subseteq V_G, \, |A|=k} \sum_{v\in V_G} \rho_A(u,v).
$$
\end{defi}
\noindent In particular, observe that $W_{\rho_0}(u,G)$ is the non-normalized total distance of the vertex $u$ in $G$.
\begin{lemma}\label{dw}
For every $u\in V_G$ and for every $p\in[0,1]$, we have:
$$
d_G^p(u)=  \frac{1}{n}\sum_{k=0}^n  p^k (1-p)^{n-k} W_{\rho_k}(u,G).
$$
\end{lemma}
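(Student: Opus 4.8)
The plan is to unwind the definition of $d_G^p(u)$ from Equation \eqref{dp} and simply reorganize the summation over subsets of $V_G$ according to their cardinality. By \eqref{dp} we have
$$
d_G^p(u)=\frac{1}{n}\sum_{v\in V_G}\sum_{A\subseteq V_G} p_A\,\rho_A(u,v),
$$
and the key observation is that, by Equation \eqref{pA}, the weight $p_A=p^{|A|}(1-p)^{n-|A|}$ depends on $A$ only through its cardinality $|A|$. Hence I would first interchange the (finite) sums over $v$ and over $A$, then group the subsets $A$ according to $|A|=k$ for $k\in\{0,1,\ldots,n\}$, writing
$$
d_G^p(u)=\frac{1}{n}\sum_{k=0}^{n}\ \sum_{\substack{A\subseteq V_G\\ |A|=k}} p^{k}(1-p)^{n-k}\sum_{v\in V_G}\rho_A(u,v).
$$

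Next I would factor the constant $p^{k}(1-p)^{n-k}$ out of the inner sum over all $A$ of size $k$, obtaining
$$
d_G^p(u)=\frac{1}{n}\sum_{k=0}^{n} p^{k}(1-p)^{n-k}\sum_{\substack{A\subseteq V_G\\ |A|=k}}\ \sum_{v\in V_G}\rho_A(u,v),
$$
and then recognize the double sum on the right as precisely $W_{\rho_k}(u,G)$ by Definition \ref{tdv}. This yields the claimed identity. One should note in passing that the case $k=0$ is consistent: the only subset of size $0$ is $A=\emptyset$, and $\rho_{\emptyset}(u,v)=d_G(u,v)$ by Remark \ref{obs}, so $W_{\rho_0}(u,G)$ is indeed the non-normalized total distance, matching the $p=0$ specialization recorded in Remark \ref{caso0}.

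There is essentially no obstacle here: the statement is a rearrangement of a finite sum, and the only point requiring (minimal) care is justifying that $p_A$ is constant on each ``level set'' $\{A : |A|=k\}$, which is immediate from \eqref{pA}. All sums involved are finite, so no convergence or interchange-of-limits issue arises, and the identity holds for every $p\in[0,1]$, including the boundary values $p=0$ and $p=1$ where one factor vanishes for all but one value of $k$.
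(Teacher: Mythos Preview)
Your argument is correct and follows exactly the approach of the paper: combine Equation~\eqref{dp} with Definition~\ref{tdv}, using that $p_A$ in Equation~\eqref{pA} depends only on $|A|$, to regroup the finite sum by cardinality. The paper states this in one sentence, and your write-up simply spells out the same rearrangement in detail.
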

\begin{proof}
The claim follows by combining Equation \eqref{dp} with Definition \ref{tdv}, since the expression of $p_A$ in Equation \eqref{pA} only depends on the cardinality of $A$.
\end{proof}

\begin{theorem}\label{fine}
Let $G=(V_G,E_G)$ be a graph of order $n$. The following are equivalent.
\begin{enumerate}
\item $G$ is TS-distance-balanced;
\item $G\wr H$ is distance-balanced for every distance-balanced graph $H$;
\item $G\wr K_{n^3 2^n}$ is distance-balanced;
\item $G$ is $p$TS-distance-balanced for more than $n$ distinct values of $p\in[0,1];$
\item the total distance vector  $W_{\rho}(u,G)$ does not depend on the particular vertex $u\in V_G$.
\end{enumerate}
\end{theorem}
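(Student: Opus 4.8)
The plan is to prove the equivalences via the cycle of implications $(1)\Rightarrow(2)\Rightarrow(3)\Rightarrow(4)\Rightarrow(5)\Rightarrow(1)$, with Lemma~\ref{dw} and Theorem~\ref{teom} doing most of the work. For $(1)\Rightarrow(2)$: if $G$ is TS-distance-balanced and $H$ is distance-balanced, then $G$ is in particular $p$TS-distance-balanced for $p=\frac{m-1}{m}$ where $m=|V_H|$, so Theorem~\ref{teom} immediately gives that $G\wr H$ is distance-balanced. The implication $(2)\Rightarrow(3)$ is trivial, since $K_{n^3 2^n}$ is a complete graph, hence vertex-transitive, hence distance-balanced. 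For $(3)\Rightarrow(4)$: by Theorem~\ref{teom}, $G\wr K_{n^3 2^n}$ distance-balanced forces $G$ to be $p$TS-distance-balanced for the single value $p=\frac{m-1}{m}$ with $m=n^3 2^n$; this is not yet ``more than $n$ values,'' so here I need a small additional argument — the point is that $m=n^3 2^n$ is large enough that the one equation $w^p_{uv}=w^p_{vu}$ at $p=\frac{m-1}{m}$ will, together with the polynomial structure from Lemma~\ref{dw}, force equality at many values. More precisely, I will route through $(5)$ directly from $(3)$; see below.

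The heart of the matter is to establish the equivalence of $(4)$ and $(5)$ with the rest, and this is where Lemma~\ref{dw} is essential. By Lemma~\ref{dw}, for a fixed pair of adjacent vertices $u,v$, the difference $n\bigl(d_G^p(u)-d_G^p(v)\bigr)$ equals $\sum_{k=0}^n p^k(1-p)^{n-k}\bigl(W_{\rho_k}(u,G)-W_{\rho_k}(v,G)\bigr)$, which is a polynomial in $p$ of degree at most $n$. By Theorem~\ref{Tmedia} (combined with Equation~\eqref{pluto}), $G$ is $p$TS-distance-balanced exactly when $d_G^p(u)=d_G^p(v)$ for all adjacent $u,v$, i.e.\ when this polynomial vanishes at $p$. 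A nonzero polynomial of degree at most $n$ has at most $n$ roots; hence if it vanishes at more than $n$ values of $p$, it is identically zero, which gives $W_{\rho_k}(u,G)=W_{\rho_k}(v,G)$ for every $k$ (one can extract the coefficients, e.g.\ by evaluating at $n+1$ points or by a change of basis from $\{p^k(1-p)^{n-k}\}$ to $\{p^k\}$, which is invertible). Since $G$ is connected, running this over a spanning set of adjacent pairs shows $W_\rho(u,G)$ is independent of $u$, giving $(4)\Rightarrow(5)$. Conversely $(5)\Rightarrow(1)$ is immediate: if all the coefficient vectors agree, the polynomial is identically zero, so $G$ is $p$TS-distance-balanced for every $p$.

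To close the loop I will show $(5)\Rightarrow(2)$ (or directly $(5)\Rightarrow(1)\Rightarrow(2)$ as above) and handle $(3)\Rightarrow(5)$: from $(3)$, applying Theorem~\ref{teom} with $m=n^3 2^n$ and $p_0=\frac{m-1}{m}=1-\frac{1}{m}$, we get that the degree-$\le n$ polynomial $P_{uv}(p):=\sum_{k=0}^n p^k(1-p)^{n-k}\bigl(W_{\rho_k}(u,G)-W_{\rho_k}(v,G)\bigr)$ vanishes at $p_0$. The main obstacle — and the reason for the peculiar constant $n^3 2^n$ — is to argue that a single evaluation at $p_0$ so close to $1$ already forces all coefficients to vanish. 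The key estimate is that each $W_{\rho_k}(u,G)$ is an integer with $|W_{\rho_k}(u,G)|<\binom{n}{k}n\cdot n^2 \le 2^n n^3$ by the crude bound $\rho_A(u,v)<n^2$ from Remark~\ref{obs} (there are $\binom{n}{k}$ subsets and $n$ choices of $v$); so the integer coefficients $c_k:=W_{\rho_k}(u,G)-W_{\rho_k}(v,G)$ of $P_{uv}$ in the basis $\{p^k(1-p)^{n-k}\}$ are bounded in absolute value by $2^{n+1}n^3 < m$. Writing $P_{uv}(p_0)=0$ and clearing the denominator $m^n$, one gets $\sum_k c_k (m-1)^k = 0$; reducing this relation modulo successive powers of $m$ (or using that $(m-1)^k\equiv (-1)^k \pmod m$ iteratively, peeling off one coefficient at a time, exactly as in a base-$m$ digit argument, which works because $|c_k|<m$) forces $c_k=0$ for all $k$. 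This yields $(3)\Rightarrow(5)$, and the cycle is complete; assembling all implications gives the stated equivalence.
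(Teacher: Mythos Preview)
Your overall strategy is essentially the paper's: the polynomial identity from Lemma~\ref{dw} drives $(4)\Leftrightarrow(5)\Leftrightarrow(1)$, Theorem~\ref{teom} gives $(1)\Rightarrow(2)$, and the choice $m=n^3 2^n$ turns the single equation at $p_0=\frac{m-1}{m}$ into a base-expansion uniqueness statement for $(3)\Rightarrow(5)$. Two slips in your $(3)\Rightarrow(5)$ step should be fixed, however.

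First, the inequality ``$2^{n+1}n^3<m$'' is false, since $m=2^n n^3$. What you actually need (and what holds) is $|c_k|<m$: both $W_{\rho_k}(u,G)$ and $W_{\rho_k}(v,G)$ are nonnegative integers strictly below $m$, so their difference lies in $(-m,m)$. The paper sidesteps this by not forming the differences at all: it keeps the two equal sums $\sum_k (m-1)^k W_{\rho_k}(u,G)$ and $\sum_k (m-1)^k W_{\rho_k}(v,G)$ and reads the $W_{\rho_k}$'s as nonnegative base-$(m-1)$ digits, which is cleaner.

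Second, your ``reduce modulo successive powers of $m$'' with $(m-1)^k\equiv(-1)^k\pmod m$ does not peel off coefficients: modulo $m$ you only get the single congruence $\sum_k(-1)^k c_k\equiv 0$, which is far from enough. The correct peeling is modulo $m-1$ (giving $c_0\equiv 0$, hence $c_0=0$ once $|c_0|<m-1$, then divide through by $m-1$ and iterate), or equivalently the size argument behind uniqueness of base-$(m-1)$ expansion. For that you need $|c_k|\le m-2$, which does follow from the sharper count $W_{\rho_k}(u,G)\le \binom{n}{k}\, n\,(n^2-1)$, but this should be stated. With these two corrections your proof goes through and matches the paper's; the only structural difference is that the paper proves $(2)\Rightarrow(4)$ directly (taking $H=K_2,\dots,K_{n+2}$ to get $n+1$ values of $p$), whereas you route $(2)\Rightarrow(3)\Rightarrow(5)$ and recover $(4)$ from $(1)$.
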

\begin{proof}${}$

(1)$\implies$(2) It is a consequence of Theorem \ref{teom}.

(2)$\implies$(4) If $G\wr H$ is distance-balanced for every distance-balanced graph $H$, in particular $G\wr K_m$ is distance-balanced for $m=2,\ldots, n+2$, and then, by Theorem \ref{teom}, the graph $G$ is $p$TS-distance-balanced for each $p\in \left\{\frac{1}{2},\frac{2}{3},\ldots, \frac{n+1}{n+2}\right\}$.

(4)$\implies$(5) For a given vertex $u\in V_G$, we define the following polynomial of degree $n$ in the variable $x$
$$
P_u(x):=\sum^n_{k=0} x^{k}(1-x)^{n-k} W_{\rho_k}(u,G).
$$
By Lemma \ref{dw}, we have $P_u(p)=n d_G^p(u)$. Combining with hypothesis (4), for any pair $u,v\in V_G$, the polynomials $P_u$ and $P_v$ share more than $n$ evaluations, and so $P_u=P_v$. It is an easy exercise to prove that this implies that $W_{\rho_k}(u,G)=W_{\rho_k}(v,G)$, for each $k=0,1,\ldots,n$, and so $W_\rho(u,G)=W_\rho(v,G)$.

(5)$\implies$(1) It is a consequence of Lemma \ref{dw} and Theorem \ref{Tmedia}.

(2)$\implies$(3) It is true since the graph $K_{n^3 2^n}$ is distance-balanced.

(3)$\implies$(5) As we already observed in Remark \ref{obs}, for every $A\subseteq V_G$, for every $u,v\in V_G$ we have $\rho_A(u,v)<n^2$. Since the number of subsets of $V_G$ having cardinality $k$ is clearly less than $2^n$, this implies
\begin{equation}\label{bound}
0<W_{\rho_k}(u,G)=\sum_{A\subseteq V_G, \, |A|=k} \sum_{v\in V_G} \rho_A(u,v)<2^n n^3.
\end{equation}
We set $m:=2^n n^3$ and $p:=\frac{m-1}{m}$. Since, by hypothesis, $G\wr K_{m}$ is distance-balanced, it follows that, for every $u,v\in V_G$:
$$
d^p_G(u)nm^n=d^p_G(v)nm^n,
$$
and then, by Lemma \ref{dw}:
\begin{equation}\label{ff}
\sum^n_{k=0} (m-1)^k W_{\rho_k}(u,G)=\sum^n_{k=0} (m-1)^k W_{\rho_k}(v,G).
\end{equation}
By virtue of Equation \eqref{bound} we can regard the quantities $W_{\rho_k}(u,G)$ (resp.\  $W_{\rho_k}(v,G)$) as the digits of $d^p_G(u)nm^n$ (resp.\ $d^p_G(v)nm^n$) in base $(m-1)$; therefore, Equation \eqref{ff} implies that $W_{\rho_k}(u,G)=W_{\rho_k}(v,G)$, for each $k=0,1,\ldots,n$, and so $W_\rho(u,G)=W_\rho(v,G)$.
\end{proof}

\begin{example}\label{explicitvector}
Lemma \ref{dw} and the characterization (5) in Theorem \ref{fine} make us able to investigate distance-balancedness  (at least in those cases for which the total distance vector is known) simply by studying roots of  polynomials. For the graph $H_9$ from Example \ref{epiccolo}, by using the total distance vectors
$$
W_\rho(v_1,H_9) =(14,252,1345,3711,6279,6941,5065,2363,641,77)
$$
$$
W_\rho(v_2,H_9) =(14,252,1360,3762,6333,6933,5001,2307,620,74),
$$
we have verified that $H_9$ is $p$TS-distance-balanced if and only if $p=0$ or $p\approx 0.9312$, which is the unique real root of the polynomial $3x^5+15x^4+23x^3+3x^2-21x-15$.
\end{example}

As we already observed in Remark \ref{obs2}, vertex-transitive graphs satisfy the equivalent properties of Theorem \ref{fine}. Moreover, we recall that  regularity, vertex-transitivity and bipartiteness are all properties preserved by the wreath product. This yields the following infinite families of examples.

\begin{example}\label{famiglie}
Let $H_9$ be the non-regular non-bipartite distance-balanced graph from Example \ref{epiccolo}. Let $H_{24}$ be the Handa graph which is non-regular, bipartite and distance-balanced  \cite{Handa}. Consider the \emph{Generalized Petersen Graph} $P(7,3)$, that is regular, distance-balanced but not vertex-transitive (see \cite{distancebalancedintro}). Then:
\begin{itemize}
\item $\{K_n \wr H_9\}_{n\in \mathbb N}$ is a family of non-regular, non-bipartite, distance-balanced graphs;
\item $\{K_n \wr P(7,3)\}_{n\in \mathbb N}$ is a family of regular, non-vertex transitive, distance-balanced graphs;
\item $\{K_{n,n} \wr H_{24}\}_{n\in \mathbb N}$ is a family of non-regular, bipartite, distance-balanced graphs.
\end{itemize}
It is clear that, in order to obtain other infinite families with the same properties, one can replace $K_{n,n}$ or $K_n$ with any (bipartite or not) vertex-transitive graph, and the second factor with any distance-balanced graph sharing the same properties of regularity, vertex-transitivity, bipartiteness.
\end{example}

\section{Conclusions}

Vertex-transitive graphs are TS-distance-balanced. More generally, if $u$ and $v$ are vertices of a graph $G=(V_G,E_G)$ for which there exists $\varphi\in \operatorname{Aut}(G)$ such that $\varphi(u)=v$, one has $W_{\rho}(u,G)=W_{\rho}(v,G)$. In other words, the total distance vector is constant on the orbits of $V_G$ under the action of $\operatorname{Aut}(G)$. This suggests that it is possible to use it as an \emph{invariant} in order to distinguish vertices: it is a finer invariant than the standard total distance (see Example \ref{explicitvector}). Therefore, it is natural to ask whether this invariant is \emph{complete} on the orbit partition of vertices. The following question is a total-distance-analogue of Question 1 in \cite{cavadonno} about \emph{the Wiener vector} and the \emph{isomorphism problem}.
\begin{question}\label{qq}
Does there exist a graph $G=(V_G,E_G)$ with two vertices $u,v\in V_G$ for which there exists no automorphism $\varphi$ such that $\varphi(u)=v$, but $W_{\rho}(u,G)=W_{\rho}(v,G)$?
\end{question}
\noindent A negative answer would imply that the equivalent conditions of Theorem \ref{fine} are also equivalent to the vertex-transitivity property.

A last remark is that, regardless of the answer to our Question \ref{qq}, the wreath product construction  produces new infinite families  of distance-balanced graphs, which cannot be obtained via the classical graph products. Moreover, the graphs in these families possibly inherit good properties from their factors (see Example \ref{famiglie}). We believe that this new approach may provide different examples and counterexamples in the field of distance-balancedness and its generalizations, and for this reason it deserves to be further investigated and exploited.

\end{document}